\newcommand{\keywords}[1]{\par\addvspace\baselineskip
\noindent\keywordname\enspace\ignorespaces#1}
\newcommand{\be}{\begin{equation}}
\newcommand{\ee}{\end{equation}}
\newcommand{\bea}{\begin{eqnarray}}
\newcommand{\eea}{\end{eqnarray}}
\newcommand{\bean}{\begin{eqnarray*}}
\newcommand{\eean}{\end{eqnarray*}}
\def\diam{{\rm diam}}
\def\Span{{\rm span}}
\def\rn{{\rm rn}}
\def\vp{\varphi}
\def\dis{\displaystyle}
\begin{document}
\mainmatter  

\title{A lower bound for the radio number of graphs}

\author{Devsi Bantva\inst{1}}
\institute{Lukhdhirji Engineering College, Morvi - 363 642 \\ Gujarat (INDIA) \\
\mailsa\\
}

\toctitle{A lower bound for the radio number of graphs}
\tocauthor{Devsi Bantva}
\maketitle

\begin{abstract}
A radio labeling of a graph $G$ is a mapping $\vp : V(G) \rightarrow \{0, 1, 2,...\}$ such that $|\vp(u)-\vp(v)|\geq \diam(G) + 1 - d(u,v)$ for every pair of distinct vertices $u,v$ of $G$, where $\diam(G)$ and $d(u,v)$ are the diameter of $G$ and distance between $u$ and $v$ in $G$, respectively. The radio number $\rn(G)$ of $G$ is the smallest number $k$ such that $G$ has radio labeling with $\max\{\vp(v):v \in V(G)\}$ = $k$.  In this paper, we slightly improve the lower bound for the radio number of graphs given by Das \emph{et al.} in \cite{Das} and, give necessary and sufficient condition to achieve the lower bound. Using this result, we determine the radio number for cartesian product of paths $P_{n}$ and the Peterson graph $P$. We give a short proof for the radio number of cartesian product of paths $P_{n}$ and complete graphs $K_{m}$ given by Kim \emph{et al.} in \cite{Kim}.
\keywords{Radio labeling, radio number, cartesian product of graphs, Peterson graph.}
\end{abstract}

\section{Introduction}
A radio labeling is a distance constrained graph labeling problem originated from well known channel assignment problem. In channel assignment problem, a set of radio stations is given and the task is to assign channels to each radio station such that interference is minimum with optimum use of spectrum. It is known that the interference constraint relies on the distance between two radio stations. The interference between radio stations increases as distance between them decreases and vice-versa. This problem is modeled by graphs: Radio stations are represented by vertices of graphs and interference level is related with distance between them. The assignment of channels is converted into graph labeling problem. Motivated through this, Chartrand \emph{et al.} introduced the concept of radio labeling in \cite{Chartrand1,Chartrand2} as follows:
\begin{definition}
{\em
A \emph{radio labeling} of a graph $G$ is a mapping $\vp: V(G) \rightarrow \{0, 1, 2, \ldots\}$ such that for every pair of distinct vertices $u, v$ of $G$,
\be\label{eqn:def}
d(u,v) + |\vp(u)-\vp(v)| \geq \diam(G) + 1.
\ee
The integer $\vp(u)$ is called the \emph{label} of $u$ under $\vp$, and the \emph{span} of $\vp$ is defined as $\Span(\vp) = \max \{|\vp(u)-\vp(v)|: u, v \in V(G)\}$. The \emph{radio number} of $G$ is defined as
$$
\rn(G) := \min_{\vp} \{\Span(\vp)\}
$$
with minimum taken over all radio labelings $\vp$ of $G$. A radio labeling $\vp$ of $G$ is \emph{optimal} if $\Span(\vp) = \rn(G)$.
}
\end{definition}

Note that an optimal radio labeling always assign 0 to some vertex and in this case, the span of $\vp$ is the maximum integer assign by $\vp$. A radio labeling is a one-to-one integral function on $V(G)$ to the set of non-negative integers and hence it induces an ordering $x_{0},x_{1},...,x_{p-1}$ $(p = |V(G)|)$ of $V(G)$ such that $0 = \vp(x_{0}) < \vp(x_{1}) < ... < \vp(x_{p-1}) = \Span(\vp)$. It is clear that if $\vp$ is an optimal radio labeling of graph $G$ and $\psi$ is any other radio labeling of $G$ then $\Span(\vp) \leq \Span(\psi)$.

A radio labeling problem is recognized as one of the tough graph labeling problems. In most of the research papers, the trend is to determine the radio number for specific graph families. A very few research papers are on general cases which gives lower bound for the radio number of trees and arbitrary graphs. These are as follows: In \cite{Daphne1}, Liu gave a lower bound for the radio number of trees and presented a class of trees, namely spiders, achieving this lower bound. In \cite{Bantva1,Bantva2}, Bantva \emph{et al.} presented this lower bound using different notations and gave a necessary and sufficient condition to achieve this lower bound. Using this result they determined the radio number for banana trees, firecrackers trees and a special class of caterpillars. Recently, in \cite{Das}, Das \emph{et al.} gave a technique to find a lower bound for the radio number of any graphs.

In this paper, our focus is on a lower bound for the radio number of graphs. We slightly improve the technique to find a lower for the radio number of graphs given by Das \emph{et al.} in \cite{Das} and, give a necessary and sufficient condition to achieve the lower bound. Our results are also useful to determine the radio number of graphs when it is slightly more than the lower bound for the radio number of graphs (see case of cartesian product of paths $P_{n}$ with the Peterson graph $P$ and complete graphs $K_{m}$ when $n$ is odd). We determine the radio number for cartesian product of paths $P_{n}$ and the Peterson graph $P$ and, give a short proof for the radio number of cartesian product of paths $P_{n}$ and complete graphs $K_{m}$ given by Kim \emph{et al.} in \cite{Kim}.

\section{A lower bound for the radio number of graphs}
In this section, we slightly improve the technique to find a lower bound for the radio number of graphs given by Das \emph{et al.} in \cite{Das} and make it more effective (more detail is given in concluding remarks). We also give a necessary and sufficient condition to achieve the lower bound.

Let $G$ = ($V, E$) be a simple connected graph without loops and multiple edges. We denote the vertex set of $G$ by $V(G)$. We assume $|V(G)|$ = $p$ throughout this paper. The distance between two vertices $u$ and $v$, denoted by $d(u,v)$, is the least length of a path joining $u$ and $v$. The diameter of a graph $G$, denoted by $\diam(G)$ (or simply $d$ to use in equations), is $\max\{d(u,v) : u, v \in V(G)\}$. Let $S$ be a induced subgraph of $G$, then for any $v \in V(G)$, $d(v,S)$ = $\min\{d(v,w) : w \in S\}$ and $\diam(S)$ = $\max\{d(u,v) : u,v \in S\}$. Denote [0, $n$] = \{$0,1,2,...,n$\}. We follow \cite{West} for standard graph theoretic definition and notation which are not defined here.

Let $H$ be an induced subgraph of connected graph $G$. The choice of induced subgraph $H$ in $G$ is very crucial in our discussion. In fact, the choice of $H$ plays an important role and key idea of our philosophy to improve a lower bound for the radio number of graphs. But at this moment, we provide only the following information about $H$ and postpone the detail discussion about it till the end. We choose a subgraph $H$ of $G$ such that $\diam(H)$ = $k$. We set $L_{0}$ = $V(H)$. Let $N(L_{0})$ denote the set of vertices which are adjacent to vertices of $L_{0}$. Set $L_{1}$ = $N(L_{0}) \setminus L_{0}$. Recursively define $L_{i+1}$ = $N(L_{i}) \setminus (L_{0} \cup ... \cup L_{i})$. Assume that the maximum value of index $i$ for $L_{i}$ is $h$ known as maximum level. Since $G$ is connected it is clear that $L_{s} \neq \phi$ for $0 \leq s \leq h$ and $L_{t} = \phi$ for $t > h$. We fix these sets for rest of discussion.

Let $\vp$ be any radio labeling of $G$ with $\Span(\vp)$ = $n$. Note that the function $\vp$ is injective but not surjective. Since $\vp$ is injective, it induces an ordering $x_{0},x_{1},....,x_{p-1}$ of $V(G)$ with 0 = $\vp(x_{0}) < \vp(x_{1}) <...< \vp(x_{p-1})$. Assume that the assign labels are {$a_{0},a_{1},...,a_{p-1}$} such that $\vp(x_{i})$ = $a_{i}$, $0 \leq i \leq p-1$ then 0 = $a_{0} < a_{1} < ... < a_{p-1} = \Span(\vp) = n$. Since $\vp$ is not surjective, it is clear that $\vp(V(G)) = \{a_{0},a_{1},...,a_{p-1}\} \subset [0,n]$. The labels assigned by $\vp$ to vertices of $G$ are called \emph{used labels} and the labels $[0,n] \setminus \{a_{0},a_{1},...,a_{p-1}\}$ are called \emph{unused labels}. So to give a lower bound, our aim is to count both the used and unused labels.

The number of unused labels are the sum of $a_{t+1}-a_{t}-1$, where $t$ varies from 0 to $p-2$. Using definition of radio labeling and triangle inequality twice for $d(x_{t+1},x_{t})$ in $G$, we obtain
\bea
a_{t+1}-a_{t}-1 & \geq & d+1-d(x_{t+1},x_{t})-1 \label{eqn:tri1} \\
& \geq & d+1-d(x_{t+1},L_{0})-d(x_{t},L_{0})-\diam(L_{0})-1 \label{eqn:tri2} \\
& = & d+1-d(x_{t+1},L_{0})-d(x_{t},L_{0})-k-1. \nonumber
\eea
Summing this latter inequality for 0 to $p-2$, we obtain the total number of unused labels. Thus the number of unused labels is at least
\bean
\dis\sum_{t=0}^{p-2}(a_{t+1}-a_{t}-1) & \geq & \dis\sum_{t=0}^{p-2}(d+1-d(x_{t+1},L_{0})-d(x_{t},L_{0})-k-1) \\
& = & (p-1)(d-k)-2\dis\sum_{t=0}^{p-2}d(x_{t},L_{0}) + d(x_{0},L_{0}) + d(x_{p-1},L_{0}) \\
& = & (p-1)(d-k)+d(x_{0},L_{0}) + d(x_{p-1},L_{0})-2\dis\sum_{i=0}^{h}|L_{i}|i.
\eean
Note that as the label set for radio labeling includes 0 as well, the used labels have an additive factor of $-1$. Hence, the sum of used and unused labels is at least as follows.
\bean
\Span(\vp) & \geq & p - 1 + (p-1)(d-k)+d(x_{0},L_{0}) + d(x_{p-1},L_{0})-2\dis\sum_{i=0}^{h}|L_{i}|i \\
& \geq & (p-1)(d-k+1)+d(x_{0},L_{0}) + d(x_{p-1},L_{0})-2\dis\sum_{i=0}^{h}|L_{i}|i.
\eean
Note that $d(x_{0},L_{0})+d(x_{p-1},L_{0})$ has minimum value if $x_{0}, x_{p-1} \in L_{0}$ when $|L_{0}| \geq 2$ and $x_{0} \in L_{0}, x_{p-1} \in L_{1}$ when $|L_{0}| = 1$. Define $\delta$ = 0 if $|L_{0}| \geq 2$ and 1 if $|L_{0}| = 1$. Hence, we obtain
\bean \rn(G) \geq (p-1)(d-k+1)+\delta-2\dis\sum_{i=0}^{h}|L_{i}|i. \eean

We now come to the selection of $H$ as an induced subgraph of $G$. We choose an induced subgraph $H$ in $G$ such that the set of vertices $V(G) \setminus V(H)$ can be partitioned into distinct sets $V_{1},V_{2},...,V_{m} (m \geq 2)$ and when we fix $V(H)$ as $L_{0}$ then it possible to order the vertices of $G$ as $x_{0},x_{1},...,x_{p-1}$ such that $d(x_{i},x_{i+1})$ satisfies the equation $d(x_{i},x_{i+1}) = d(x_{i},L_{0})+d(x_{i+1},L_{0})+\diam(L_{0})$, where $x_{i} \in V_{i}, x_{i+1} \in V_{j}, i \neq j$ or, one or both of $x_{i}, x_{i+1}$ is in $V(H)$. We also keep in mind that such an ordering $x_{0},x_{1},...,x_{p-1}$ satisfies conditions $d(x_{0},L_{0}) = 0$, $d(x_{p-1},L_{0})$ = 1 when $|L_{0}| = 1$ and $d(x_{0},L_{0}) = d(x_{p-1},L_{0}) = 0$ when $|L_{0}| \geq 2$. We also inform the readers that in case of trees, the set of weight center(s) $W(T)$ (see \cite{Daphne1} and \cite{Bantva2} for definition and detail about weight center) is always a good choice as $L_{0}$ and more useful results are given in \cite{Daphne1} and \cite{Bantva1,Bantva2} to determine the radio number of trees than the technique discussed above. We advised the readers to refer \cite{Daphne1} and \cite{Bantva1,Bantva2} for the radio number of trees.

Finally, from above discussion, we summarize our result as follows.

\begin{theorem}\label{thm:lower} Let $G$ be a simple connected graph of order $p$, diameter $d$ and $L_{i}$'s, $\delta$ are defined as earlier. Denote $\diam(L_{0})$ = $k$. Then
\be\label{eqn:lower} \rn(G) \geq (p-1)(d-k+1)+\delta-2\dis\sum_{i=0}^{h}|L_{i}|i. \ee
\end{theorem}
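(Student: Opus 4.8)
The statement is exactly the inequality assembled in the discussion above, so the plan is simply to package that computation into a clean argument that starts from an arbitrary radio labeling. First I would fix any radio labeling $\vp$ of $G$ with $\Span(\vp) = n$, and use the induced ordering $x_{0}, x_{1}, \ldots, x_{p-1}$ of $V(G)$ together with its used labels $0 = a_{0} < a_{1} < \cdots < a_{p-1} = n$. Writing $n = \sum_{t=0}^{p-2}(a_{t+1}-a_{t})$ and splitting each term as $(a_{t+1}-a_{t}-1)+1$, one obtains $n = (p-1) + \sum_{t=0}^{p-2}(a_{t+1}-a_{t}-1)$, where the remaining sum counts exactly the unused labels in $[0,n]$; so it suffices to bound that sum from below.

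Next I would bound each gap. By the radio condition (\ref{eqn:def}), $a_{t+1}-a_{t} \geq d+1-d(x_{t},x_{t+1})$, and applying the triangle inequality through $L_{0}$ --- route from $x_{t}$ to a nearest vertex of $L_{0}$, across $L_{0}$, and out to $x_{t+1}$ --- gives $d(x_{t},x_{t+1}) \leq d(x_{t},L_{0}) + \diam(L_{0}) + d(x_{t+1},L_{0}) = d(x_{t},L_{0}) + k + d(x_{t+1},L_{0})$. Substituting yields (\ref{eqn:tri2}), i.e. $a_{t+1}-a_{t}-1 \geq (d-k) - d(x_{t},L_{0}) - d(x_{t+1},L_{0})$.

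Then I would sum over $t = 0, \ldots, p-2$. The constant terms contribute $(p-1)(d-k)$; the distance terms telescope to $2\sum_{t=0}^{p-1} d(x_{t},L_{0}) - d(x_{0},L_{0}) - d(x_{p-1},L_{0})$; and, since every vertex of $L_{i}$ is at distance exactly $i$ from $L_{0}$, $\sum_{t=0}^{p-1} d(x_{t},L_{0}) = \sum_{i=0}^{h} i\,|L_{i}|$. Adding back the $p-1$ from the used labels gives $\Span(\vp) \geq (p-1)(d-k+1) + d(x_{0},L_{0}) + d(x_{p-1},L_{0}) - 2\sum_{i=0}^{h} i\,|L_{i}|$. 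Finally, $d(x_{0},L_{0})+d(x_{p-1},L_{0}) \geq 0$ always, while if $|L_{0}| = 1$ at most one of $x_{0}, x_{p-1}$ can lie in $L_{0}$, so the sum is $\geq 1$; in both cases this lower bound is exactly $\delta$. Since $\vp$ was arbitrary, (\ref{eqn:lower}) follows for $\rn(G)$.

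The one step that deserves care is the double application of the triangle inequality producing (\ref{eqn:tri2}): it relies on the fact that $L_{0} = V(H)$ is the vertex set of a subgraph with $\diam(H) = k$, so any two vertices of $L_{0}$ are joined by a path in $G$ of length at most $k$ and the ``detour through $L_{0}$'' is legitimate. Everything else --- the telescoping, the reindexing $\sum_{t} d(x_{t},L_{0}) = \sum_{i} i|L_{i}|$, and reading off $\delta$ from $|L_{0}|$ --- is routine bookkeeping, and no extremal or existence argument is needed beyond the observation that a bound valid for every radio labeling is valid for an optimal one.
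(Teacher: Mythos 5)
Your proposal is correct and follows essentially the same route as the paper: bound each gap $a_{t+1}-a_{t}-1$ via the radio condition plus the triangle inequality through $L_{0}$, sum and telescope, identify $\sum_{t}d(x_{t},L_{0})$ with $\sum_{i}i|L_{i}|$, and bound $d(x_{0},L_{0})+d(x_{p-1},L_{0})$ below by $\delta$. Your explicit justification of the $\delta$ term (at most one of $x_{0},x_{p-1}$ can lie in $L_{0}$ when $|L_{0}|=1$) is a slightly cleaner statement of the same observation the paper makes.
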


\begin{theorem}\label{thm:main} Let $G$ be a simple connected graph of order $p$, diameter $d$ and $L_{i}$'s, $\delta$ are defined as earlier. Denote $\diam(L_{0})$ = $k$. Then
\be\label{eqn:main} \rn(G) = (p-1)(d-k+1)+\delta-2\dis\sum_{i=0}^{h}|L_{i}|i \ee
holds if and only if there exist a radio labeling $\vp$ with $0 = \vp(x_{0}) < \vp(x_{1}) <...< \vp(x_{p-1}) = \Span(\vp) = \rn(G)$ such that all the following hold for $0 \leq i \leq p-1$:
\begin{enumerate}[\rm (a)]
  \item $d(x_{i},x_{i+1})$ = $d(x_{i},L_{0})+d(x_{i+1},L_{0})+k$,
  \item $x_{0}, x_{p-1} \in L_{0}$ if $|L_{0}| \geq 2$ and $x_{0} \in L_{0}, x_{p-1} \in L_{1}$ if $|L_{0}| = 1$,
  \item $\vp(x_{0})$ = 0 and $\vp(x_{i+1})$ = $\vp(x_{i})+d+1-d(x_{i},L_{0})-d(x_{i+1},L_{0})-k$.
\end{enumerate}
\end{theorem}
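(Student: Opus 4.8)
The plan is to prove both implications by examining the derivation of Theorem~\ref{thm:lower} step by step: the lower bound there came from one chain of inequalities, so equality in \eqref{eqn:main} must correspond to tightness in each inequality of that chain, and conversely tightness in each inequality yields equality.

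\emph{Necessity.} Assume \eqref{eqn:main} holds and pick any optimal radio labeling $\vp$; let $x_0, x_1, \ldots, x_{p-1}$ be the induced ordering of $V(G)$, so $0 = \vp(x_0) < \cdots < \vp(x_{p-1}) = \rn(G)$, where $\vp(x_0) = 0$ because an optimal labeling assigns $0$ to some vertex. Write $\rn(G) = \sum_{t=0}^{p-2}\bigl(\vp(x_{t+1}) - \vp(x_t)\bigr)$ and bound each $\vp(x_{t+1}) - \vp(x_t) - 1$ from below using \eqref{eqn:tri1} (the radio inequality \eqref{eqn:def} for $x_t, x_{t+1}$) followed by \eqref{eqn:tri2} (two triangle inequalities through $L_0$); adding $p-1$ for the used labels and using $d(x_0,L_0) + d(x_{p-1},L_0) \geq \delta$ recovers exactly the right-hand side of \eqref{eqn:lower}. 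Since that quantity equals $\rn(G)$ by hypothesis, every one of these inequalities is an equality. Equality in \eqref{eqn:tri2} for each $t$ is condition~(a); equality in $d(x_0,L_0) + d(x_{p-1},L_0) \geq \delta$, together with $x_0 \neq x_{p-1}$ and $L_1 \neq \emptyset$ when $|L_0| = 1$, puts $x_0$ and $x_{p-1}$ in the levels required by condition~(b) (after possibly replacing $\vp$ by $v \mapsto \rn(G) - \vp(v)$, which affects neither (a) nor the span); and equality in \eqref{eqn:tri1} together with~(a) rearranges into $\vp(x_{t+1}) = \vp(x_t) + d + 1 - d(x_t,L_0) - d(x_{t+1},L_0) - k$, which is condition~(c).

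\emph{Sufficiency.} Conversely, suppose $\vp$ is a radio labeling whose induced ordering $x_0, \ldots, x_{p-1}$ satisfies (a)--(c). Then $\Span(\vp) = \vp(x_{p-1}) = \sum_{t=0}^{p-2}\bigl(\vp(x_{t+1}) - \vp(x_t)\bigr)$, which by~(c) equals $(p-1)(d-k+1) - \sum_{t=0}^{p-2}\bigl(d(x_t,L_0) + d(x_{t+1},L_0)\bigr)$. Re-indexing the two telescoped sums yields $\sum_{t=0}^{p-2}\bigl(d(x_t,L_0) + d(x_{t+1},L_0)\bigr) = 2\sum_{t=0}^{p-1} d(x_t,L_0) - d(x_0,L_0) - d(x_{p-1},L_0)$; since $d(x,L_0) = i$ exactly when $x \in L_i$ we get $\sum_{t=0}^{p-1} d(x_t,L_0) = \sum_{i=0}^{h}|L_i|\,i$, and condition~(b) makes the two boundary terms sum to $\delta$. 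Hence $\Span(\vp) = (p-1)(d-k+1) + \delta - 2\sum_{i=0}^{h}|L_i|\,i$; combining this with the lower bound of Theorem~\ref{thm:lower} (and $\rn(G) \leq \Span(\vp)$) gives \eqref{eqn:main}.

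The bookkeeping in the sufficiency part -- telescoping and re-indexing the level sums -- is routine. The step requiring the most care is the termwise-equality argument in the necessity part: one must argue precisely that a finite sum of lower-bounded terms whose total equals the sum of the bounds forces each term to attain its bound, and then translate each such equality faithfully into (a), (b), (c), in particular matching equality in the double triangle inequality \eqref{eqn:tri2} with condition~(a) on consecutive pairs and recognising (c) as the conjunction of tightness in \eqref{eqn:tri1} with (a). A small accompanying check is that $\delta$ is genuinely the minimum of $d(x_0,L_0) + d(x_{p-1},L_0)$, which uses $x_0 \neq x_{p-1}$ and the nonemptiness of $L_1$ in the case $|L_0| = 1$.
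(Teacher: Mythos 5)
Your proposal is correct and follows essentially the same route as the paper: necessity by forcing equality in each step of the chain of inequalities \eqref{eqn:tri1}, \eqref{eqn:tri2} and in $d(x_0,L_0)+d(x_{p-1},L_0)\geq\delta$, and sufficiency by telescoping $\sum_{t}(\vp(x_{t+1})-\vp(x_t))$ using (a)--(c). Your added remarks (the possible replacement of $\vp$ by $v\mapsto \rn(G)-\vp(v)$ to fix the orientation in (b), and the check that $\delta$ really is the minimum of the boundary terms) only make explicit details the paper leaves implicit.
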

\begin{proof}\textsf{Necessity:} Suppose that (\ref{eqn:main}) holds. Then there exist an optimal radio labeling $\vp$ of $G$ with $\Span(\vp)$ = $(p-1)(d-k+1)+\delta-2\sum_{i=0}^{h}|L_{i}|i$. Let $x_{0},x_{1},...,x_{p-1}$ with 0 = $\vp(x_{0}) < \vp(x_{1}) < ... < \vp(x_{p-1}) = \Span(\vp)$ is an ordering of $V(G)$ induced by $\vp$. Note that $\Span(\vp)$ = $(p-1)(d-k+1)+\delta-2\sum_{i=0}^{h}|L_{i}|i$ is possible if equalities hold in (\ref{eqn:tri1}) and (\ref{eqn:tri2}) together with $x_{0},x_{p-1} \in L_{0}$ when $|L_{0}| \geq 2$ and, $x_{0} \in L_{0}, x_{p-1} \in L_{1}$ when $|L_{0}| = 1$. Note that equalities in (\ref{eqn:tri1}) and (\ref{eqn:tri2}) gives $d(x_{i},x_{i+1})$ = $d(x_{i},L_{0})+d(x_{i+1},L_{0})+k$. These all together turn the definition of radio labeling (\ref{eqn:def}) as $\vp(x_{0})$ = 0 and $\vp(x_{i+1})$ = $\vp(x_{i})+d+1-L(x_{i})-L(x_{i+1})-k$.

\textsf{Sufficiency:} Suppose that there exist a radio labeling $\vp$ with $0 = \vp(x_{0}) < \vp(x_{1}) <...< \vp(x_{p-1}) = \Span(\vp) =  \rn(G)$ such that (a), (b) and (c) holds. It is enough to prove that $\Span(\vp)$ = $(p-1)(d-k+1)+\delta-2\sum_{i=0}^{h}|L_{i}|i$. From (b) and (c), we have
\bean
\Span(\vp) & = & \vp(x_{p-1}) - \vp(x_{0}) \\
& = & \dis\sum_{t=0}^{p-2}\bigg(\vp(x_{t+1})-\vp(x_{t})\bigg) \\
& = & \dis\sum_{t=0}^{p-2}\bigg(d+1-d(x_{t+1},L_{0})-d(x_{t},L_{0})-k\bigg) \\
& = & (p-1)(d-k+1)-2\dis\sum_{t=0}^{p-2}d(x_{t},L_{0})+L(x_{0},L_{0})+d(x_{p-1},L_{0}) \\
& = & (p-1)(d-k+1)+\delta-2\dis\sum_{i=0}^{h}|L_{i}|i
\eean
which completes the proof.
\end{proof}

\begin{remark}{\rm As a consequence of above Theorem \ref{thm:main}, we obtain that if one or more conditions of Theorem \ref{thm:main} does not hold then
\be \rn(G) > (p-1)(d-k+1)+\delta-2\dis\sum_{i=0}^{h}|L_{i}|i \ee}
\end{remark}

\section{Radio number for some cartesian product of two graphs}
In this section, we continue to use the terminology and notation defined in previous section. We determine the radio number for cartesian product of paths $P_{n}$ and the Peterson graph $P$ using results of previous section. We present a short proof for the radio number of cartesian product of paths $P_{n}$ and complete graphs $K_{m}$ given by Kim \emph{et al.} in \cite{Kim} using our results approach.

Let $G$ = ($V(G), E(G)$) and $H$ = ($V(H), E(H)$) be two graphs. The cartesian product of $G$ and $H$, denoted by $G \square H$, is the graph $G_{\square}$ = ($V(G_{\square}), E(G_{\square})$) where $V(G_{\square})$ = $V(G) \times V(H)$ and two vertices ($a,b$) and ($c,d$) are adjacent if $a$ = $c$ and ($b,d$) $\in E(H)$ or $b$ = $d$ and ($a,c$) $\in E(G)$.

\subsection{Radio number for $P_{n} \square P$}
The peterson graph, denoted by $P$, is the complement of the line graph of complete graph $K_{5}$. The peterson graph and, the cartesian product of a path $P_{5}$ and the Peterson graph $P$ is shown in Fig. 1 and 2, respectively. Note that $|P_{n} \square P|$ = $|P_{n}| \times |P|$ = $10n$ and $\diam(P_{n} \square P)$ = $n+1$. We denote the vertex set of $P_{n}$ by $V(P_{n})$ = $\{u_{1},u_{2},...,u_{n}\}$ with $(u_{i},u_{i+1}) \in E(P_{n}), 1 \leq i \leq n-1$ and the vertex set of $P$ by $V(P)$ = $\{v_{1},v_{2},...,v_{10}\}$ with $E(P)$ = \{$v_{i}v_{i+1}$, $v_{1}v_{6}$, $v_{1}v_{8}$, $v_{2}v_{7}$, $v_{3}v_{9}$, $v_{4}v_{8}$, $v_{5}v_{7}$, $v_{6}v_{9}$, $v_{7}v_{10}$, $v_{8}v_{10}$, $v_{9}v_{10}$ : $1 \leq i \leq 5$\}.
\begin{figure}[h!]
  \centering
  \includegraphics[width=4.5in]{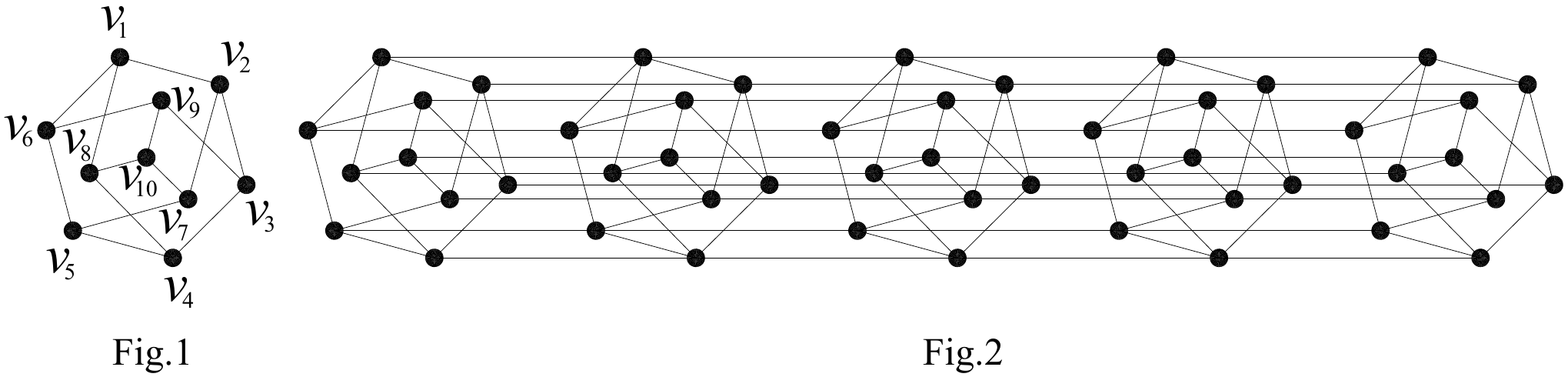}
\end{figure}
\begin{theorem}\label{thm:pet} Let $n \geq 3$ be an integer. Then
\begin{equation}\label{eqn:pnkm}
\rn(P_{n} \square P) := \left\{
\begin{array}{ll}
5n^{2}-n+1, & \mbox{if $n$ is even}, \\ [0.1cm]
5n^{2}-n+6, & \mbox{if $n$ is odd}.
\end{array}
\right.
\end{equation}
\end{theorem}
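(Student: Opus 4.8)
The plan is to instantiate Theorem~\ref{thm:lower} and Theorem~\ref{thm:main} (together with the remark after it) for a carefully chosen $L_0$, and then to supply a matching radio labeling. Throughout write $G = P_n \square P$, so $p = |V(G)| = 10n$, $d = \diam(G) = n+1$, and $d\big((u_i,v),(u_j,w)\big) = |i-j| + d_P(v,w)$ for vertices of $G$. For even $n = 2m$ I would take $L_0$ to be the union of the two central copies $\{u_m,u_{m+1}\}\times V(P)$: then $k = \diam(L_0) = 3$, $\delta = 0$, the level $L_i$ is the union of the copies indexed by $u_{m-i}$ and $u_{m+1+i}$ (so $|L_0| = |L_i| = 20$ for $0\le i\le h = m-1$) and $\sum_{i=0}^{h}|L_i|\,i = 10m(m-1)$; substituting into (\ref{eqn:lower}) gives $\rn(G)\ge (10n-1)(n-1) - 20m(m-1) = 5n^2 - n + 1$. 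For odd $n = 2m+1$ I would take instead $L_0$ to be the single central copy $\{u_{m+1}\}\times V(P)$: then $k = \diam(L_0) = 2$, $\delta = 0$, $L_i$ is the union of the copies indexed by $u_{m+1-i}$ and $u_{m+1+i}$ for $1\le i\le h = m$ (each of size $20$) while $|L_0| = 10$, so $\sum_{i=0}^{h}|L_i|\,i = \frac{5(n^2-1)}{2}$ and (\ref{eqn:lower}) yields $\rn(G)\ge 5n^2 - n + 5$.

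For even $n$ the lower bound already equals the claimed value, so it remains to construct a radio labeling of span $5n^2 - n + 1$; by Theorem~\ref{thm:main} it is enough to order $V(G)$ as $x_0,\ldots,x_{p-1}$ so that (a) and (b) hold and then to read off the labels forced by (c). The key local fact is that under (a) two consecutive vertices can never lie in the same ``half'' of the path: if $x_t$ and $x_{t+1}$ both had path-coordinate $\le m-1$, condition (a) would force $d_P(v_t,v_{t+1})\ge 4$, impossible since $\diam(P)=2$. A short case check records which transitions among the left half $V_1$, the right half $V_2$ and the two central copies are admissible (and shows $d_P(v_t,v_{t+1})=2$ for \emph{every} consecutive pair). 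Because the two halves have equal size $5n-10$, an admissible ordering can be produced by zig-zagging between $V_1$ and $V_2$ with the $20$ vertices of $L_0$ interleaved as connectors (in particular $x_0,x_{p-1}\in L_0$), the $P$-coordinates being threaded along a walk in $\overline P\cong T(5)=L(K_5)$ that uses each vertex of $P$ exactly $n$ times so that consecutive $P$-coordinates are always at $P$-distance $2$. One then checks that the resulting labels have span $5n^2-n+1$ and satisfy $d(x_i,x_j)+|\vp(x_i)-\vp(x_j)|\ge d+1$ for all $i<j$; consecutive pairs are tight by design and, for $j-i\ge 2$, the inequality holds with room to spare since $\vp$ increases by at least $1$ at each step while the relevant distances are small.

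For odd $n$ the bound of Theorem~\ref{thm:lower} is one short of the asserted value, so the crux is to exclude equality. Assume $\rn(G) = 5n^2 - n + 5$; by Theorem~\ref{thm:main} there is an ordering satisfying (a), (b), (c). As before (a) forbids two consecutive vertices from the same half, but now $|V_1| = |V_2| = 5n - 5$, so the crude block count that works for even $n$ no longer gives a contradiction, and one must use the labels forced by (c). I would analyse the extreme level $L_h$ ($h = m$), made up of the ten ``leftmost'' vertices $(u_1,v)$ and the ten ``rightmost'' vertices $(u_n,v)$: working out $\vp(x_{i+1})-\vp(x_{i-1})$ from (c) when $x_i\in L_h$ and comparing it with $d+1-d(x_{i-1},x_{i+1})$, a case analysis (the decisive case being $x_i=(u_1,v)$, $x_{i+1}=(u_n,w)$ both in $L_h$, which forces the $P$-distance of the companions to be so small that the radio inequality fails unless $x_{i-1},x_{i+2}\in L_0$) shows that every occurrence of a leftmost--rightmost consecutive pair must be flanked on both sides by vertices of $L_0$. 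Since only ten vertices of $L_0$ are available, they cannot flank all the occurrences of extreme vertices that the ordering is forced to contain, so some pair of vertices at $G$-distance at most $2$ receives labels differing by at most $n = d+1-2$ --- a contradiction. Hence $\rn(G)\ge 5n^2-n+6$, and a construction of the same zig-zag type as in the even case, but now deliberately inserting exactly one extra unit of slack at a single step (so that (c) fails in precisely one place, as the remark after Theorem~\ref{thm:main} anticipates), gives a radio labeling of span $5n^2-n+6$.

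I expect the odd-$n$ non-achievability step to be the main obstacle: with the two halves balanced there is no pigeonhole on alternating blocks as in the even case, so the extra $+1$ has to be extracted from the exact labels that conditions (a)--(c) impose on the vertices near the ends of the path, and pinning down how many leftmost--rightmost adjacencies are unavoidable is delicate. The remaining work --- spelling out the explicit orderings and verifying the radio condition for all, not merely consecutive, pairs --- is routine but calculation-heavy, and accounts for most of the length of the argument.
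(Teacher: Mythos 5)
Your framework and lower-bound computations coincide with the paper's (same choices of $L_{0}$, same arithmetic giving $5n^{2}-n+1$ for even $n$ and $5n^{2}-n+5$ for odd $n$), but both of the genuinely hard steps are left as assertions, and in each case the sketch as written would not go through. For the upper bounds, your claim that for $j-i\ge 2$ the radio condition holds ``with room to spare'' is false: with the labels forced by (c), a pair at ordering-distance two satisfies $\vp(x_{i+2})-\vp(x_{i}) = 2(d-k+1)-d(x_{i},L_{0})-2d(x_{i+1},L_{0})-d(x_{i+2},L_{0})$, and in the even case this equals $n+2-d(x_{i},x_{i+2})$ \emph{exactly} when the ordering is good --- equality, not slack --- and it fails outright unless (i) every consecutive triple has level sum at most $n-1$ (so columns must be paired at complementary levels, not merely ``zig-zagged'' between halves) and (ii) $x_{i}$ and $x_{i+2}$ are at Petersen-distance at least $2$ (two same-column vertices at $P$-distance $1$ kill it). Hence the existence of your walk in $\overline{P}$ threaded through the columns subject to all of these constraints is precisely what has to be proved; the paper does this by exhibiting explicit permutations ($\alpha,\beta,\sigma,\tau$ for even $n$, and analogous ones for $n\equiv 1,3 \pmod 4$ with one deliberate $+1$ step) and then checking the cases $j=i+1$, $i+2$, $i+3$, $j\ge i+4$ separately. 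Your odd-$n$ construction with ``one extra unit of slack'' is likewise only asserted.

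The odd-case exclusion of $5n^{2}-n+5$ is also aimed at the wrong configuration. An ordering satisfying (a)--(c) need not contain any leftmost--rightmost consecutive pair: an extreme vertex can perfectly well be followed and preceded by low- but nonzero-level vertices of the opposite half, and then your case analysis and your count of flanking $L_{0}$-vertices produce no contradiction, even though such an ordering is in fact already impossible. The forced obstruction is different: for every internal $t$, condition (c) together with the radio requirement on the pair $(x_{t-1},x_{t+1})$ gives $d(x_{t},L_{0})+\min\{d(x_{t-1},L_{0}),d(x_{t+1},L_{0})\}\le (n-1)/2$, so each of the $20$ vertices of the extreme level must have at least one ordering-neighbour in $L_{0}$; since $|L_{0}|=10$ and, by (b), $x_{0},x_{p-1}\in L_{0}$ have only one neighbour each, at most $2\cdot 10-2=18$ such adjacencies are available --- a contradiction. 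This is in substance the paper's argument (stated there as the existence of $x_{t}\in L\cup R$ with both adjacent level sums exceeding $(n-1)/2$, followed by the computation of $\vp(x_{t+1})-\vp(x_{t-1})$); your pigeonhole needs to be redirected from ``leftmost--rightmost pairs'' to ``extreme vertices lacking an $L_{0}$-neighbour'' to close the gap.
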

\begin{proof}~We consider the following two cases.

\textsf{Case-1:}~$n$ is even.~~In this case, we set the subgraph induced by vertex set \{$(u_{n/2},v_{1})$, $(u_{n/2},v_{2})$ ,..., $(u_{n/2},v_{10})$, $(u_{n/2+1},v_{1})$, $(u_{n/2+1},v_{2})$ ,..., $(u_{n/2+1},v_{10})$\} of $P_{n} \square P$ as $L_{0}$ then $\diam(L_{0})$ = $k$ = 3 and the maximum level in $P_{n} \square P$ is $h = n/2-1$. Note that $p$ = $10n$ and $\sum_{i=0}^{h}|L_{i}|i$ = $5n(n-2)/2$. Substituting these all in (\ref{eqn:lower}), we obtain $\rn(P_{n} \square P) \geq 5n^{2}-n+1$.

We now prove that this lower bound is tight. Note that for this purpose, it suffices to give a radio labeling $\vp$ of $P_{n} \square P$ with span equal to this lower bound and for this, it is enough to give a radio labeling satisfying conditions of Theorem \ref{thm:main}. We first order the vertices of $P_{n} \square P$ and define recursive formula of radio labeling $\vp$ on it. Let $\alpha = \bigl(\begin{smallmatrix}
  1 & 2 & 3 & 4 & 5 & 6 & 7 & 8 & 9 & 10 \\
  1 & 8 & 3 & 7 & 2 & 10 & 5 & 4 & 6 & 9
\end{smallmatrix}\bigr)$, $\beta = \bigl(\begin{smallmatrix}
  1 & 2 & 3 & 4 & 5 & 6 & 7 & 8 & 9 & 10 \\
  9 & 1 & 10 & 3 & 7 & 2 & 4 & 6 & 5 & 8
\end{smallmatrix}\bigr)$, $\sigma = \bigl(\begin{smallmatrix}
  1 & 2 & 3 & 4 & 5 & 6 & 7 & 8 & 9 & 10 \\
  2 & 9 & 1 & 8 & 3 & 7 & 6 & 5 & 4 & 10
\end{smallmatrix}\bigr)$ and $\tau = \bigl(\begin{smallmatrix}
  1 & 2 & 3 & 4 & 5 & 6 & 7 & 8 & 9 & 10 \\
  7 & 2 & 8 & 1 & 10 & 3 & 5 & 4 & 6 & 9
\end{smallmatrix}\bigr)$ be four permutations. Using these four permutations, we first rename $(u_{i},v_{j})(1 \leq i \leq n,1 \leq j \leq 10)$ as $(a_{r},b_{s})$ as follows:
\begin{equation*}
(a_{r},b_{s}) := \left\{
\begin{array}{ll}
(u_{i},v_{\alpha(j)}), & \mbox{if $1 \leq i \leq n/2$ and $(n/2-i) \equiv 0$ (mod 2)}, \\ [0.1cm]
(u_{i},v_{\beta(j)}), & \mbox{if $1 \leq i \leq n/2$ and $(n/2-i) \equiv 1$ (mod 2)}, \\ [0.1cm]
(u_{i},v_{\sigma(j)}), & \mbox{if $n/2+1 \leq i \leq n$ and $(n-i) \equiv 0$ (mod 2)}, \\ [0.1cm]
(u_{i},v_{\tau(j)}), & \mbox{if $n/2+1 \leq i \leq n$ and $(n-i) \equiv 1$ (mod 2)}.
\end{array}
\right.
\end{equation*}
We now define an ordering $x_{0},x_{1},...,x_{p-1}$ as follows: Let $x_{t} := (a_{r},b_{s})$, where
\begin{equation*}
t := \left\{
\begin{array}{ll}
(n/2-r)20+2(s-1), & \mbox{if $1 \leq r \leq n/2$}, \\ [0.1cm]
(n-r)20+2s-1, & \mbox{if $n/2+1 \leq r \leq n$}.
\end{array}
\right.
\end{equation*}
Then note that $x_{0},x_{p-1} \in L_{0}$ and for $0 \leq i \leq p-2$, $d(x_{i},x_{i+1})$ = $d(x_{i},L_{0})+d(x_{i+1},L_{0})+k$. Define $\vp$ as $\vp(x_{0}) = 0$, $\vp(x_{i+1})$ = $\vp(x_{i})+d+1-d(x_{i},L_{0})-d(x_{i+1},L_{0})-k$.

\textsf{Claim-1:} $\vp$ is a radio labeling with $\Span(\vp)$ = $5n^{2}-n+1$.

Let $x_{i}$ and $x_{j}, 0 \leq i < j \leq p-1$ be two arbitrary vertices. If $j = i+1$ then $\vp(x_{j})-\vp(x_{i})$ = $d+1-d(x_{i},L_{0})-d(x_{i+1},L_{0})-k$ = $d+1-d(x_{i},x_{i+1})$. If $j \geq i+4$ then $\vp(x_{j})-\vp(x_{i}) \geq (j-i)(d-k+1)-\sum_{t=i+1}^{j-1}d(x_{t},L_{0})-d(x_{i},L_{0})-d(x_{j},L_{0}) \geq 4(n-1)-(n-2)/2-n/2-(n-2)/2-n/2 > n+2 >n+2-d(x_{i},x_{j}) = d+1-d(x_{i},x_{j})$. If $j = i+3$ then $\vp(x_{j})-\vp(x_{i}) = (j-i)(d-k+1)-\sum_{t=i+1}^{j-1}d(x_{t},L_{0})-d(x_{i},L_{0})-d(x_{j},L_{0}) \geq 3(n-1)-n/2-(n-2)/2-(n-2)/2 = (3n-2)/2 \geq n+1 \geq n+2-d(x_{i},x_{j}) = d+1-d(x_{i},x_{j})$ as $d(x_{i},x_{j}) \geq 1$. If $j=i+2$ then $\vp(x_{j})-\vp(x_{i})$ = $(j-i)(d-k+1)-d(x_{i},L_{0})-2d(x_{i+1},L_{0})-d(x_{i+2},L_{0})$. If (1) $d(x_{i},L_{0})+2d(x_{i+1},L_{0})+d(x_{i+2},L_{0})$ = $n-1$ then $d(x_{i},x_{j})$ = 3 and hence $\vp(x_{j})-\vp(x_{i})$ = $2(n-1)-(n-1)$ = $(n-1)$ = $n+2-d(x_{i},x_{j})$ = $d+1-d(x_{i},x_{j})$. (2) $d(x_{i},L_{0})+2d(x_{i+1},L_{0})+d(x_{i+2},L_{0})$ = $n-2$ then $d(x_{i},x_{j})$ = 2 and hence $\vp(x_{j})-\vp(x_{i})$ = $2(n-1)-(n-2)$ = $n$ = $n+2-d(x_{i},x_{j})$ = $d+1-d(x_{i},x_{j})$. Hence, $\vp$ is a radio labeling. The span of $\vp$ is $\Span(\vp)$ = $\vp(x_{p-1})-\vp(x_{0})$ =
$\sum_{t=0}^{p-1}(\vp(x_{t+1})-\vp(x_{t}))$ = $(p-1)(d-k+1)-2\sum_{t=0}^{p-1}d(x_{t},L_{0})$ = $(p-1)(d-k+1)-2\sum_{i=0}^{h}|L_{i}|i$ which is equal to $5n^{2}-n+1$ in the present case.

\textsf{Case-2:}~$n$ is odd.~~In this case, we set the subgraph induced by vertex set $\{(u_{(n+1)/2},v_{1})$, $(u_{(n+1)/2},v_{2})$,..., $(u_{(n+1)/2},v_{10})\}$ of $P_{n} \square P$ as $L_{0}$ then $\diam(L_{0})$ = $k$ = 2 and the maximum level in $P_{n} \square P$ is $h = n/2-1$. Note that $p$ = $10n$ and $\sum_{i=0}^{h}|L_{i}|i$ = $5(n^{2}-1)/2$. Substituting these all in \eqref{eqn:lower}, we obtain $\rn(P_{n} \square P) \geq 5n^{2}-n+5$. Now if possible then assume that $\rn(P_{n} \square P) = 5n^{2}-n+5$ then there exist a radio labeling $\vp$ of $P_{n} \square P$ with $\Span(\vp)$ = $5n^{2}-n+5$. By Theorem \ref{thm:main}, $\vp$ induces an ordering $x_{0},x_{1},...,x_{p-1}$ of $V(P_{n} \square P)$ with $0 = \vp(x_{0}) < \vp(x_{1}) < ... < \vp(x_{p-1}) = \Span(\vp)$ which satisfies (a), (b) and (c) of Theorem \ref{thm:main}. Let $L$ = $\{(u_{1},v_{1}),(u_{1},v_{2}),...,(u_{1},v_{10})\}$, $C$ = \{$(u_{(n+1)/2},v_{1})$, $(u_{(n+1)/2},v_{2})$ ,..., $(u_{(n+1)/2},v_{10})$\} and $R$ = $\{(u_{n},v_{1})$, $(u_{n},v_{2})$,..., $(u_{n},v_{10})\}$. Since $|L|$ = $|R|$ = $|C|$ and $\vp$ satisfies conditions (a), (b) and (c) of Theorem \ref{thm:main}, there exist a vertex $x_{t} \in L$ or $R$ such that $d(x_{t-1},L_{0})+d(x_{t},L_{0}) > (n-1)/2$ and $d(x_{t},L_{0})+d(x_{t+1},L_{0}) > (n-1)/2$. Without loss of generality, assume that $d(x_{t-1},L_{0})+d(x_{t},L_{0}) \geq d(x_{t},L_{0})+d(x_{t+1},L_{0})$. Since an ordering $x_{0},x_{1},...,x_{p-1}$ of $V(P_{n} \square P)$ satisfies condition (a) of Theorem \ref{thm:main}, it is clear that $d(x_{t-1},x_{t+1})$ = $d(x_{t-1},L_{0})-d(x_{t+1},L_{0})+2$. Now consider $\vp(x_{t+1})-\vp(x_{t-1})$ = $\vp(x_{t+1})-\vp(x_{t})+\vp(x_{t})-\vp(x_{t-1})$ = $n+2-d(x_{t+1},L_{0})-d(x_{t},L_{0})-2+n+2-d(x_{t},L_{0})-d(x_{t-1},L_{0})-2$ = $2n-(d(x_{t-1},L_{0})-d(x_{t+1},L_{0})+2)-2(d(x_{t},L_{0})+d(x_{t+1},L_{0})-1) \leq 2n-d(x_{t-1},x_{t+1})-2((n+1)/2-1)$ = $n+1-d(x_{t-1},x_{t+1}) < n+2-d(x_{t-1},x_{t+1})$, a contradiction with $\vp$ is a radio labeling. Hence, $\rn(P_{n} \square P) \geq 5n^{2}-n+6$. We now prove that this lower bound is the actual value for $\rn(P_{n} \square P)$. Note that for this purpose, it is enough to give a radio labeling $\vp$ of $P_{n} \square P$ with $\Span(\vp)$ = $5n^{2}-n+6$. We order the vertices of $P_{n} \square P$ and define recursive formula on this ordering for $\vp$. We consider the following two cases.

\textsf{Subcase-2.1:} $n \equiv 1$ (mod 4).

Let $\alpha = \bigl(\begin{smallmatrix}
  1 & 2 & 3 & 4 & 5 & 6 & 7 & 8 & 9 & 10 \\
  1 & 4 & 3 & 6 & 2 & 7 & 9 & 8 & 10 & 5
\end{smallmatrix}\bigr)$,
$\beta = \bigl(\begin{smallmatrix}
  1 & 2 & 3 & 4 & 5 & 6 & 7 & 8 & 9 & 10 \\
  2 & 7 & 1 & 5 & 3 & 6 & 8 & 10 & 9 & 4
\end{smallmatrix}\bigr)$, $\sigma = \bigl(\begin{smallmatrix}
  1 & 2 & 3 & 4 & 5 & 6 & 7 & 8 & 9 & 10 \\
  2 & 3 & 1 & 7 & 4 & 5 & 6 & 9 & 10 & 8
\end{smallmatrix}\bigr)$ be three permutations. Using these three permutations, we first rename $(u_{i},v_{j})$, $(1 \leq i \leq n, 1 \leq j \leq 10)$ as $(a_{r},b_{s})$ as follows:
\begin{equation*}
(a_{r},b_{s}) := \left\{
\begin{array}{ll}
(u_{i},v_{\alpha(j)}), & \mbox{if $i$ = $(n+1)/2$}, \\ [0.2cm]
(u_{i},v_{\sigma^{2(n-i)}\beta(j)}), & \mbox{if $(n+1)/2 < i \leq n$}, \\ [0.2cm]
(u_{i},v_{\sigma^{2((n+1)/2-i)+1}\beta(j)}), & \mbox{if $1 \leq i < (n+1)/2$}.
\end{array}
\right.
\end{equation*}
We now define an ordering $x_{0},x_{1},...,x_{p-1}$ as follows: Set $x_{0} = (a_{(n+1)/2},b_{1})$, $x_{p-1} = (a_{(n+1)/2},b_{10})$ and for $1 \leq t \leq p-2$, let $x_{t} := (a_{r},b_{s})$, where
\begin{equation*}
t := \left\{
\begin{array}{ll}
(n+1-2r)+n(s-1), & \mbox{if $1 \leq r \leq (n+1)/2$ and $1 \leq s \leq 7$}, \\ [0.1cm]
2(n-r)+n(s-1)+1, & \mbox{if $(n+1)/2 < r \leq n$ and $1 \leq s \leq 7$},\\ [0.1cm]
(n+1-2r)+n(s-1)-1, & \mbox{if $1 \leq r < (n+1)/2$ and $8 \leq s \leq 10$}, \\ [0.1cm]
2(n-r)+n(s-1), & \mbox{if $(n+1)/2 < r \leq n$ and $8 \leq s \leq 10$}, \\ [0.1cm]
ns-1, & \mbox{if $r = (n+1)/2$ and $8 \leq s \leq 10$}.
\end{array}
\right.
\end{equation*}
Then note that $x_{0},x_{p-1} \in L_{0}$ and for $0 \leq i \leq p-2$, $d(x_{i},x_{i+1})$ = $d(x_{i},L_{0})+d(x_{i+1},L_{0})+k$. Define $\vp$ as follows: $\vp(x_{0}) =0$ and $\vp(x_{i+1})$ = $\vp(x_{i})+d+1-d(x_{i},L_{0})-d(x_{i+1},L_{0})-k$ for $0 \leq i \leq p-2, i \neq p-3n-1$ and $\vp(x_{p-3n})$ = $\vp(x_{p-3n-1})+d+1-d(x_{i},L_{0})-d(x_{i+1},L_{0})-k+1$.

\textsf{Claim-2:} $\vp$ is a radio labeling with $\Span(\vp)$ = $5n^{2}-n+6$.

Let $x_{i}$ and $x_{j}, 0 \leq i < j \leq p-1$ be two arbitrary vertices. If $j = i+1$ then it is clear that $\vp(x_{j})-\vp(x_{i}) \geq d+1-d(x_{i},L_{0})-d(x_{j},L_{0})-k$ = $d+1-d(x_{i},x_{j})$. If $j \geq i+3$ then if (1) $0 \leq i \leq p-3n-4$ or $p-3n \leq i \leq p-4$ then $\vp(x_{j})-\vp(x_{i}) \geq (j-i)(d-k+1)-2\sum_{t=i+1}^{j-1}d(x_{t},L_{0})-d(x_{i},L_{0})-d(x_{j},L_{0}) = 3n-(d(x_{i},L_{0})+d(x_{i+1},L_{0}))+(d(x_{i+1},L_{0})+d(x_{i+2},L_{0}))+(d(x_{i+2},L_{0})+d(x_{i+3},L_{0})) \geq 3n-(n+1)/2-(n-1)/2-(n+1)/2 = (3n-1)/2 > n+1 > n+2-d(x_{i},x_{j}) = d+1-d(x_{i},x_{j})$ as $d(x_{i},x_{j}) \geq 1$; (2) $i \in \{p-3n-3,p-3n-2,p-3n-1\}$ then $\vp(x_{j})-\vp(x_{i}) \geq (j-i)(d-k+1)-2\sum_{t=i+1}^{j-1}d(x_{t},L_{0})-d(x_{i},L_{0})-d(x_{j},L_{0})+1 = 3n-(d(x_{i},L_{0})+d(x_{i+1},L_{0}))+(d(x_{i+1},L_{0})+d(x_{i+2},L_{0}))+(d(x_{i+2},L_{0})+d(x_{i+3},L_{0}))+1 \geq 3n-(n+1)/2-(n-1)-(n+1)/2+1 = n+1 > n+2-d(x_{i},x_{j}) = d+1-d(x_{i},x_{j})$ as $d(x_{i},x_{j}) \geq 1$. If $j = i+2$ then if (1) $0 \leq i \leq p-3n-3$ or $p-3n \leq i \leq p-3$ then $\vp(x_{j})-\vp(x_{i}) = (j-i)(d-k+1)-( d(x_{i},L_{0})+d(x_{i+1},L_{0}))-(d(x_{i+1},L_{0})+d(x_{i+1},L_{0})) \geq 2n-(n+1)/2-(n-1)/2 = n \geq n+2-d(x_{i},x_{j}) = d+1-d(x_{i},x_{j})$ as $d(x_{i},x_{j}) \geq 2$; (2) $i \in \{p-3n-2,p-3n-1\}$ then it is easy to verify $\vp(x_{j})-\vp(x_{i}) \geq  n+2-d(x_{i},x_{j}) = d+1-d(x_{i},x_{j})$. Hence, $\vp$ is a radio labeling. The span of $\vp$ is $\Span(\vp) = (p-1)(d-k+1)-2\sum_{i=0}^{h}|L_{i}|i+1$ which is equal to $5n^{2}-n+6$ in the present case.

\textsf{Subcase-2.2:} $n \equiv 3$ (mod 4).

Let $\alpha = \bigl(\begin{smallmatrix}
  1 & 2 & 3 & 4 & 5 & 6 & 7 & 8 & 9 & 10 \\
  1 & 7 & 2 & 9 & 3 & 8 & 5 & 6 & 4 & 10
\end{smallmatrix}\bigr)$ and $\sigma = \bigl(\begin{smallmatrix}
  1 & 2 & 3 & 4 & 5 & 6 & 7 & 8 & 9 & 10 \\
  3 & 1 & 2 & 6 & 4 & 5 & 8 & 9 & 10 & 7
\end{smallmatrix}\bigr)$ be two permutations. Using these two permutations, we first rename $(u_{i},v_{j}), (1 \leq i \leq n, 1 \leq j \leq 10)$ as $(a_{r},b_{s})$ as follows:
\begin{equation*}
(a_{r},b_{s}) := \left\{
\begin{array}{ll}
(u_{i},v_{\alpha(j)}), & \mbox{if $i$ = $(n+1)/2$}, \\ [0.2cm]
(u_{i},v_{\sigma^{2(n-i)+1}\alpha(j)}), & \mbox{if $(n+1)/2 < i \leq n$ }, \\ [0.2cm]
(u_{i},v_{\sigma^{2((n+1)/2-i)-1}\alpha(j)}), & \mbox{if $1 \leq i < (n+1)/2$}.
\end{array}
\right.
\end{equation*}
We now define an ordering $x_{0},x_{1},...,x_{p-1}$ as follows: Set $x_{0} = (a_{(n+1)/2},b_{1})$, $x_{p-1} = (a_{(n+1)/2},b_{10})$ and for $1 \leq t \leq p-2$, let $x_{t} := (a_{r},b_{s})$, where
\begin{equation*}
t := \left\{
\begin{array}{ll}
(n+1-2r)+n(s-1), & \mbox{if $1 \leq r \leq (n+1)/2$ and $1 \leq s \leq 9$}, \\ [0.1cm]
2(n-r)+n(s-1)+1, & \mbox{if $(n+1)/2 < r \leq n$ and $1 \leq s \leq 9$},\\ [0.1cm]
(n+1-2r)+n(s-1)-1, & \mbox{if $1 \leq r < (n+1)/2$ and $s = 10$}, \\ [0.1cm]
2(n-r)+n(s-1), & \mbox{if $(n+1)/2 < r \leq n$ and $s = 10$}.
\end{array}
\right.
\end{equation*}
Then note that $x_{0},x_{p-1} \in L_{0}$ and for $0 \leq i \leq p-2$, $d(x_{i},x_{i+1})$ = $d(x_{i},L_{0})+d(x_{i+1},L_{0})+k$. Define $\vp$ as follows: $\vp(x_{0})$ = 0, $\vp(x_{i+1})$ = $\vp(x_{i})+d+1-d(x_{i},L_{0})-d(x_{i+1},L_{0})-k$ for $0 \leq i \leq p-2, i \neq p-n-1$ and $\vp(x_{p-n})$ = $\vp(x_{p-n-1})+d+1-d(x_{i},L_{0})-d(x_{i+1},L_{0})-k+1$.

\textsf{Claim-3:} $\vp$ is a radio labeling with $\Span(\vp)$ = $5n^{2}-n+6$.

Let $x_{i}$ and $x_{j}, 0 \leq i < j \leq p-1$ be two arbitrary vertices. If $j = i+1$ then it is clear that $\vp(x_{j})-\vp(x_{i}) \geq d+1-d(x_{i},L_{0})-d(x_{j},L_{0})-k$ = $d+1-d(x_{i},x_{j})$. If $j \geq i+3$ then if (1) $0 \leq i \leq p-n-4$ or $p-n \leq i \leq p-4$ then $\vp(x_{j})-\vp(x_{i}) \geq (j-i)(d-k+1)-2\sum_{t=i+1}^{j-1}d(x_{t},L_{0})-d(x_{i},L_{0})-d(x_{j},L_{0}) = 3n-(d(x_{i},L_{0})+d(x_{i+1},L_{0}))+(d(x_{i+1},L_{0})+d(x_{i+2},L_{0}))+(d(x_{i+2},L_{0})+d(x_{i+3},L_{0})) \geq 3n-(n+1)/2-(n-1)/2-(n+1)/2 = (3n-1)/2 > n+1 > n+2-d(x_{i},x_{j}) = d+1-d(x_{i},x_{j})$ as $d(x_{i},x_{j}) \geq 1$; (2) $i \in \{p-n-3,p-n-2,p-n-1\}$ then $\vp(x_{j})-\vp(x_{i}) \geq (j-i)(d-k+1)-2\sum_{t=i+1}^{j-1}d(x_{t},L_{0})-d(x_{i},L_{0})-d(x_{j},L_{0})+1 = 3n-(d(x_{i},L_{0})+d(x_{i+1},L_{0}))+(d(x_{i+1},L_{0})+d(x_{i+2},L_{0}))+(d(x_{i+2},L_{0})+d(x_{i+3},L_{0}))+1 \geq 3n-(n+1)/2-(n-1)-(n+1)/2+1 = n+1 > n+2-d(x_{i},x_{j}) = d+1-d(x_{i},x_{j})$ as $d(x_{i},x_{j}) \geq 1$. If $j = i+2$ then if (1) $0 \leq i \leq p-n-3$ or $p-n \leq i \leq p-3$ then $\vp(x_{j})-\vp(x_{i}) = (j-i)(d-k+1)-(d(x_{i},L_{0})+d(x_{i+1},L_{0}))-(d(x_{i+1},L_{0})+d(x_{i+1},L_{0})) \geq 2n-(n+1)/2-(n-1)/2 = n \geq n+2-d(x_{i},x_{j}) = d+1-d(x_{i},x_{j})$ as $d(x_{i},x_{j}) \geq 2$; (2) $i \in \{p-n-2,p-n-1\}$ then it is easy to verify $\vp(x_{j})-\vp(x_{i}) \geq  n+2-d(x_{i},x_{j}) = d+1-d(x_{i},x_{j})$. Hence, $\vp$ is a radio labeling. The span of $\vp$ is $\Span(\vp) = (p-1)(d-k+1)-2\sum_{i=0}^{h}|L_{i}|i+1$ which is equal to $5n^{2}-n+6$ in the present case.
\end{proof}

\begin{example}{\rm In Table 1, an ordering and the corresponding optimal radio labeling of vertices of $P_{6} \square P$ is shown.}
\end{example}
\begin{table}[h!]\label{table1}
\centering
\caption{An ordering and optimal radio labeling for vertices of $P_{6} \square P$.}
\begin{tabular}{|c|ll|lr|lr|lr|lr|lr|}
\hline
$(u_{i},v_{j}) \frac{i \rightarrow}{j \downarrow}$ &  \multicolumn{2}{|c|}{1} & \multicolumn{2}{|c|}{2} &  \multicolumn{2}{|c|}{3} & \multicolumn{2}{|c|}{4} &  \multicolumn{2}{|c|}{5} & \multicolumn{2}{|c|}{6} \\ \hline\hline
1 & $x_{40}$ & $118$ & $x_{36}$ & $107$ & \underline{\textbf{$x_{0}$}} & \underline{\textbf{0}} & $x_{43}$ & $127$ & $x_{33}$ & $98$ & $x_{3}$ & $9$ \\
2 & $x_{54}$ & 160 & $x_{20}$ & 59 & $x_{14}$ & 42 & $x_{57}$ & 169 & $x_{23}$ & 68 & $x_{17}$ & 51  \\
3 & $x_{44}$ & 130 & $x_{38}$ & 113 & $x_{4}$ & 12 & $x_{41}$ & 121 & $x_{35}$ & 104 & $x_{1}$ & 3  \\
4 & $x_{52}$ & 154 & $x_{24}$ & 71 & $x_{12}$ & 36 & $x_{55}$ & 163 & $x_{21}$ & 62 & $x_{15}$ & 45  \\
5 & $x_{42}$ & 124 & $x_{32}$ & 95 & $x_{2}$ & 6 & $x_{45}$ & 133 & $x_{39}$ & 116 & $x_{5}$ & 15  \\
6 & $x_{58}$ & 172 & $x_{22}$ & 65 & $x_{18}$ & 54 & $x_{53}$ & 157 & $x_{25}$ & 74 & $x_{13}$ & 39  \\
7 & $x_{48}$ & 142 & $x_{26}$ & 77 & $x_{8}$ & 24 & $x_{51}$ & 151 & $x_{29}$ & 86 & $x_{11}$ & 33  \\
8 & $x_{46}$ & 136 & $x_{30}$ & 89 & $x_{6}$ & 18 & $x_{49}$ & 145 & $x_{27}$ & 80 & $x_{9}$ & 27  \\
9 & $x_{50}$ & 148 & $x_{28}$ & 83 & $x_{10}$ & 30 & $x_{47}$ & 139 & $x_{31}$ & 92 & $x_{7}$ & 21  \\
10 & $x_{56}$ & 166 & $x_{34}$ & 101 & $x_{16}$ & 48 & \underline{\textbf{$x_{59}$}} & \underline{\textbf{175}} & $x_{37}$ & 110 & $x_{19}$ & 57  \\
\hline
\end{tabular}
\end{table}

\begin{example}{\rm In Table 2, an ordering and the corresponding optimal radio labeling of vertices of $P_{5} \square P$ is shown.}
\end{example}
\begin{table}[h!]\label{table1}
\centering
\caption{An ordering and optimal radio labeling for vertices of $P_{5} \square P$.}
\begin{tabular}{|c|lr|lr|lr|lr|lr|}
\hline
$(u_{i},v_{j}) \frac{i \rightarrow}{j \downarrow}$ &  \multicolumn{2}{|c|}{1} & \multicolumn{2}{|c|}{2} &  \multicolumn{2}{|c|}{3} & \multicolumn{2}{|c|}{4} &  \multicolumn{2}{|c|}{5} \\ \hline\hline
1 & $x_{9}$ & 23 & $x_{12}$ & 31 & \underline{\textbf{$x_{0}$}} & \underline{\textbf{0}} & $x_{3}$ & 8 & $x_{6}$ & 16 \\
2 & $x_{19}$ & 49 & $x_{27}$ & 70 & $x_{15}$ & 39 & $x_{23}$ & 60 & $x_{31}$ & 81 \\
3 & $x_{4}$ & 10 & $x_{7}$ & 18 & $x_{10}$ & 26 & $x_{13}$ & 34 & $x_{1}$ & 3 \\
4 & $x_{29}$ & 75 & $x_{17}$ & 44 & $x_{25}$ & 65 & $x_{33}$ & 86 & $x_{21}$ & 55 \\
5 & $x_{14}$ & 36 & $x_{2}$ & 5 & $x_{5}$ & 13 & $x_{8}$ & 21 & $x_{11}$ & 29 \\
6 & $x_{34}$ & 88 & $x_{22}$ & 57 & $x_{30}$ & 78 & $x_{18}$ & 47 & $x_{26}$ & 68 \\
7 & $x_{38}$ & 97 & $x_{41}$ & 105 & $x_{44}$ & 113 & $x_{47}$ & 121 & \underline{$x_{35}$} & \underline{90} \\
8 & $x_{48}$ & 123 & $x_{36}$ & 92 & $x_{39}$ & 100 & $x_{42}$ & 108 & $x_{45}$ & 116 \\
9 & $x_{43}$ & 110 & $x_{46}$ & 118 & \underline{\textbf{$x_{49}$}} & \underline{\textbf{126}} & $x_{37}$ & 95 & $x_{40}$ & 103 \\
10 & $x_{24}$ & 62 & $x_{32}$ & 83 & $x_{20}$ & 52 & $x_{28}$ & 73 & $x_{16}$ & 42 \\
\hline
\end{tabular}
\end{table}

\begin{example}{\rm In Table 3, an ordering and the corresponding optimal radio labeling of vertices of $P_{7} \square P$ is shown.}
\end{example}
\begin{table}[h!]\label{table3}
\centering
\caption{An ordering and optimal radio labeling for vertices of $P_{7} \square P$.}
\begin{tabular}{|c|lr|lr|lr|lr|lr|lr|lr|}
\hline
$(u_{i},v_{j}) \frac{i \rightarrow}{j \downarrow}$ &  \multicolumn{2}{|c|}{1} & \multicolumn{2}{|c|}{2} &  \multicolumn{2}{|c|}{3} & \multicolumn{2}{|c|}{4} &  \multicolumn{2}{|c|}{5} & \multicolumn{2}{|c|}{6} & \multicolumn{2}{|c|}{7} \\ \hline\hline
1 & $x_{6}$ & 21 & $x_{18}$ & 64 & $x_{9}$ & 32 & \underline{\textbf{$x_{0}$}} & \underline{\textbf{\textbf{0}}} & $x_{12}$ & 43 & $x_{3}$ & 11 & $x_{15}$ & 54 \\
2 & $x_{62}$ & 221 & $x_{46}$ & 164 & $x_{58}$ & 207 & $x_{42}$ & 150 & $x_{54}$ & 193 & $x_{65}$ & 230 & $x_{50}$ & 179 \\
3 & $x_{13}$ & 46 & $x_{4}$ & 14 & $x_{16}$ & 57 & $x_{7}$ & 25 & $x_{19}$ & 68 & $x_{10}$ & 36 & $x_{1}$ & 4 \\
4 & $x_{48}$ & 171 & $x_{60}$ & 214 & $x_{44}$ & 157 & $x_{56}$ & 200 & $x_{67}$ & 237 & $x_{52}$ & 186 & \underline{$x_{63}$} & \underline{223} \\
5 & $x_{20}$ & 71 & $x_{11}$ & 39 & $x_{2}$ & 7 & $x_{14}$ & 50 & $x_{5}$ & 18 & $x_{17}$ & 61 & $x_{8}$ & 29 \\
6 & $x_{68}$ & 240 & $x_{53}$ & 189 & $x_{64}$ & 226 & $x_{49}$ & 175 & $x_{61}$ & 218 & $x_{45}$ & 161 & $x_{57}$ & 204 \\
7 & $x_{34}$ & 121 & $x_{25}$ & 89 & $x_{37}$ & 132 & $x_{28}$ & 100 & $x_{40}$ & 143 & $x_{31}$ & 111 & $x_{22}$ & 79 \\
8 & $x_{41}$ & 146 & $x_{32}$ & 114 & $x_{23}$ & 82 & $x_{35}$ & 125 & $x_{26}$ & 93 & $x_{38}$ & 136 & $x_{29}$ & 104 \\
9 & $x_{27}$ & 96 & $x_{39}$ & 139 & $x_{30}$ & 107 & $x_{21}$ & 75 & $x_{33}$ & 118 & $x_{24}$ & 86 & $x_{36}$ & 129 \\
10 & $x_{55}$ & 196 & $x_{66}$ & 233 & $x_{51}$ & 182 & \underline{\textbf{$x_{69}$}} & \underline{\textbf{244}} & $x_{47}$ & 168 & $x_{59}$ & 211 & $x_{43}$ & 154 \\
\hline
\end{tabular}
\end{table}

\subsection{Radio number for $P_{n} \square K_{m}$}
In this section, using Theorem \ref{thm:lower} and \ref{thm:main}, we give a short proof for the radio number of $P_{n} \square K_{m}$ given by Kim \emph{et al.} in \cite{Kim}.

We assume that $m \geq 3$ and $n \geq 4$. Note that $|P_{n} \square K_{m}|$ = $|P_{n}| \times |K_{m}|$ = $nm$ and $\diam(P_{n} \square K_{m})$ = $n$. We denote the vertex set of $P_{n}$ by $V(P_{n})$ = \{$u_{1},u_{2},...,u_{n}$\} with ($u_{i}, u_{i+1}$) $\in E(P_{n})$, $1 \leq i \leq n-1$ and the vertex set of $K_{m}$ by $V(K_{m})$ = \{$v_{1},v_{2},...,v_{m}$\} with ($v_{i},v_{j}$) $\in E(K_{m})$, $1 \leq i, j \leq m, i \neq j$ then the vertex set of $P_{n} \square K_{m}$ is $V(P_{n} \square K_{m})$ = \{$(u_{i},v_{j}) : 1 \leq i \leq n, 1 \leq j \leq m$\}.

\begin{theorem} Let $m \geq 3$ and $n \geq 4$ be integers. Then
\begin{equation}\label{eqn:pnkm}
\rn(P_{n} \square K_{m}) := \left\{
\begin{array}{ll}
\frac{mn^{2}-2n+2}{2}, & \mbox{if $n$ is even}, \\ [0.1cm]
\frac{mn^{2}-2n+m+2}{2}, & \mbox{if $n$ is odd}.
\end{array}
\right.
\end{equation}
\end{theorem}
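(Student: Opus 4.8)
The plan is to follow the two‑case scheme of the proof of Theorem~\ref{thm:pet}. In each parity of $n$ I first fix a convenient induced subgraph $L_0$, read the lower bound off Theorem~\ref{thm:lower} (adding one unit by hand when $n$ is odd), and then match it with a radio labeling produced from an explicit ordering of $V(P_{n}\square K_{m})$ via the recursion in part~(c) of Theorem~\ref{thm:main}.

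\textsf{Case $n$ even.} Take $L_0$ to be the subgraph induced by the two central $K_{m}$‑fibres $\{(u_{n/2},v_j),(u_{n/2+1},v_j):1\le j\le m\}$; then $k=\diam(L_0)=2$, $|L_0|=2m$ so $\delta=0$, $h=n/2-1$, $|L_i|=2m$ for $1\le i\le h$, and $d((u_i,v_j),L_0)$ is just the distance of $u_i$ to $\{u_{n/2},u_{n/2+1}\}$ in $P_{n}$. Substituting $p=nm$, $d=n$, $k=2$ into (\ref{eqn:lower}) gives $\rn(P_{n}\square K_{m})\ge(mn^2-2n+2)/2$. For the reverse inequality I would build an ordering $x_0,\dots,x_{p-1}$ that strictly alternates between the two sides of $L_0$; since each side carries exactly $nm/2$ vertices this can be done with $x_0$ in fibre $u_{n/2}$ and $x_{p-1}$ in fibre $u_{n/2+1}$, so (b) holds. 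For an alternating ordering, condition~(a) is equivalent to asking that consecutive vertices lie in distinct $K_{m}$‑fibres, because two vertices sharing a fibre, or lying in fibres on the same side of $L_0$, are strictly closer than $d(\cdot,L_0)+d(\cdot,L_0)+2$. I would enforce this by renaming each $K_{m}$‑fibre through a short list of permutations of $[1,m]$ (the analogue of $\alpha,\beta,\sigma,\tau$ in the proof of Theorem~\ref{thm:pet}) and reading $x_0,\dots,x_{p-1}$ off an index formula modelled on the one there, with the constant $20$ replaced by $2m$; the hypothesis $m\ge3$ is exactly what keeps enough such permutations available. Defining $\vp$ by~(c), it then remains to verify (\ref{eqn:def}) for $x_i,x_j$ with $j-i\ge2$, which splits into the cases $j=i+2$ and $j\ge i+3$ and is a routine estimate, shorter than in Theorem~\ref{thm:pet} because every nonzero distance inside a $K_{m}$‑fibre equals~$1$. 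By the sufficiency half of Theorem~\ref{thm:main}, $\Span(\vp)=(mn^2-2n+2)/2$.

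\textsf{Case $n$ odd.} Take $L_0$ to be the single central fibre $\{(u_{(n+1)/2},v_j):1\le j\le m\}$, so $k=1$, $|L_0|=m\ge3$ (hence $\delta=0$), $h=(n-1)/2$ and $|L_i|=2m$ for $1\le i\le h$; (\ref{eqn:lower}) gives $\rn(P_{n}\square K_{m})\ge(mn^2-2n+m)/2$. To improve this by one, I argue exactly as in the odd part of Theorem~\ref{thm:pet} that the bound cannot be attained: if a radio labeling $\vp$ as in Theorem~\ref{thm:main} existed, with induced ordering $x_0,\dots,x_{p-1}$, let $L,R$ be the extreme fibres $u_1,u_n$; every vertex of $L\cup R$ is at distance $(n-1)/2$ from $L_0$, and by~(b) none of them is $x_0$ or $x_{p-1}$. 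The $2m$ vertices of $L\cup R$ each have two order‑neighbours, whereas the $m$ vertices of $L_0$---two of which have only one order‑neighbour---can be order‑adjacent to at most $2m-2$ of them, so some $x_t\in L\cup R$ has $x_{t-1},x_{t+1}\notin L_0$. Condition~(a) forces $x_{t-1}$ and $x_{t+1}$ onto the side of $L_0$ opposite $x_t$, whence $d(x_{t-1},x_{t+1})\le|d(x_{t-1},L_0)-d(x_{t+1},L_0)|+1$; computing $\vp(x_{t+1})-\vp(x_{t-1})$ from~(c) and using $d(x_{t-1},L_0),d(x_{t+1},L_0)\ge1$ then contradicts (\ref{eqn:def}) for the pair $x_{t-1},x_{t+1}$. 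Hence $\rn(P_{n}\square K_{m})\ge(mn^2-2n+m)/2+1=(mn^2-2n+m+2)/2$. Finally I would match this by the same ordering technique---an explicit side‑alternating ordering built from a short family of permutations of $[1,m]$, possibly (as for $P_{n}\square P$) after separating $n\equiv1$ and $n\equiv3\pmod{4}$---but now applying the recursion of~(c) at every step except one, where a single extra unit is inserted; a last routine check of (\ref{eqn:def}) for pairs at ordering‑distance $\ge2$ completes the proof.

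The main obstacle is the explicit construction itself: selecting the permutations of $[1,m]$ and the index formula so that (a) holds throughout---and, in the odd case, pinpointing the single place where the extra unit can be absorbed---followed by the routine but lengthy verification of (\ref{eqn:def}). Everything else is essentially forced: the two lower bounds come straight from Theorem~\ref{thm:lower}, and the ``$+1$'' for odd $n$ from the counting argument above.
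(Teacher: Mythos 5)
Your lower-bound halves are correct and coincide with the paper's argument: the same choices of $L_{0}$ in the two parities, the same substitution into (\ref{eqn:lower}), and for odd $n$ the same ``$+1$'' step via Theorem~\ref{thm:main}. In fact your counting justification for the existence of a vertex $x_{t}\in L\cup R$ whose two order-neighbours both avoid $L_{0}$ (at most $2m-2$ order-adjacencies can meet $L_{0}$, because $x_{0},x_{p-1}\in L_{0}$ have only one order-neighbour each) is more explicit than the paper's, which merely asserts this from $|L|=|R|=|C|$; the ensuing contradiction you derive is the same as the paper's.

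The genuine gap is the upper bound. The paper does not construct a labeling at all: it observes that the optimal radio labeling already given by Kim \emph{et al.} in \cite{Kim} attains the two stated values, and that citation is the entire matching step. You instead promise an explicit side-alternating ordering built from permutations of $[1,m]$ and an index formula ``with $20$ replaced by $2m$'', but none of it is produced: no permutations, no index formula, no identification of the single step where the extra unit is inserted when $n$ is odd, and no verification of (\ref{eqn:def}) for pairs at ordering-distance $\geq 2$ --- and you yourself flag this as the main obstacle. Moreover, the one concrete structural claim you make is not correct as stated: in a side-alternating ordering consecutive vertices automatically lie in distinct $K_{m}$-fibres, yet condition (a) of Theorem~\ref{thm:main} can still fail, namely when the two vertices carry the same $K_{m}$-coordinate, since then $d(x_{i},x_{i+1})=d(x_{i},L_{0})+d(x_{i+1},L_{0})+k-1$; so ``distinct $K_{m}$-fibres'' is not equivalent to (a), and it is exactly the requirement of distinct $K_{m}$-coordinates that your permutation bookkeeping would have to guarantee (in the even case also for consecutive vertices inside $L_{0}$, where different columns and different $K_{m}$-coordinates are both needed to realize distance $k=2$). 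As it stands your proposal proves only the inequalities $\rn(P_{n}\square K_{m})\geq (mn^{2}-2n+2)/2$ and $\geq (mn^{2}-2n+m+2)/2$; to finish, either carry out the construction in full detail as the paper does for $P_{n}\square P$, or simply invoke the labeling of \cite{Kim}, which is what the paper does here.
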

\begin{proof} We consider the following two cases.

\textsf{Case-1}: $n$ is even.~~In this case, we set the subgraph induced by vertex set \{$(u_{n/2},v_{1})$, $(u_{n/2},v_{2})$ ,..., $(u_{n/2},v_{m})$, $(u_{n/2+1},v_{1})$, $(u_{n/2+1},v_{2})$ ,..., $(u_{n/2+1},v_{m})$\} of $P_{n} \square K_{m}$ as $L_{0}$ then $\diam(L_{0})$ = $k$ = 2 and the maximum level in $P_{n} \square K_{m}$ is $h$ = $n/2-1$. Note that $p = mn$ and $\sum_{i=0}^{h}|L_{i}|i$ = $mn(n-2)/2$. Substituting these all in (\ref{eqn:lower}), we obtain $\rn(P_{n} \square K_{m}) \geq (mn^{2}-2n+2)/2$. In fact, this lower bound is the actual value for $\rn(P_{n} \square K_{m})$ and for that, it is enough to give a radio labeling with span equal to this lower bound. Note that the radio labeling given by Kim \emph{et al.} in \cite{Kim} serve this purpose (the readers are required to understand and adjust with notation matter) which complete the proof.

\textsf{Case-2}: $n$ is odd.~~In this case, we set the subgraph induced by vertex set $\{(u_{(n+1)/2},v_{1})$, $(u_{(n+1)/2},v_{2})$ ,..., $(u_{(n+1)/2},v_{m})\}$ in $P_{n} \square K_{m}$ as $L_{0}$ then $\diam(L_{0})$ = $k$ = 1 and the maximum level in $P_{n} \square K_{m}$ is $h$ = $n/2-1$. Note that $p = nm$ and $\sum_{i=0}^{h}|L_{i}|i$ = $m(n^{2}-1)/4$. Substituting these all in (\ref{eqn:lower}), we obtain $\rn(P_{n} \square K_{m}) \geq (mn^{2}-2n+m)/2$. Now if possible then assume that $\rn(P_{n} \square K_{m}) = (mn^{2}-2n+m)/2$ then there exist a radio labeling $\vp$ of $P_{n} \square K_{m}$ with $\Span(\vp)$ = $(mn^{2}-2n+m)/2$. By Theorem \ref{thm:main}, $\vp$ induces an ordering $x_{0},x_{1},...,x_{p-1}$ of $V(P_{n} \square P)$ with $0 = \vp(x_{0}) < \vp(x_{1}) < ... < \vp(x_{p-1}) = \Span(\vp)$ which satisfies (a), (b) and (c) of Theorem \ref{thm:main}. Let $L$ = $\{(u_{1},v_{1}),(u_{1},v_{2}),...,(u_{1},v_{m})\}$, $C$ = $\{(u_{(n+1)/2},v_{1})$, $(u_{(n+1)/2}, v_{2}), ... ,(u_{(n+1)/2},v_{m})\}$ and $R$ = $\{(u_{n},v_{1}),(u_{n},v_{2}),...,(u_{n},v_{m})\}$. Since $|L|$ = $|R|$ = $|C|$ and $\vp$ satisfies conditions (a), (b) and (c) of Theorem \ref{thm:main}, there exist a vertex $x_{t} \in L$ or $R$ such that $d(x_{t-1},L_{0})+d(x_{t},L_{0}) > (n-1)/2$ and $d(x_{t},L_{0})+d(x_{t+1},L_{0}) > (n-1)/2$. Without loss of generality assume that $d(x_{t-1},L_{0})+d(x_{t},L_{0}) \geq d(x_{t},L_{0})+d(x_{t-1},L_{0})$. Since an ordering $x_{0},x_{1},...,x_{p-1}$ of $V(P_{n} \square K_{m})$ satisfies condition (a) of Theorem \ref{thm:main}, it is clear that $d(x_{t-1},x_{t+1})$ = $d(x_{t-1},L_{0})-d(x_{t+1},L_{0})+1$. Now consider $\vp(x_{t+1})-\vp(x_{t-1})$ = $\vp(x_{t+1})-\vp(x_{t})+\vp(x_{t})-\vp(x_{t-1})$ = $n+1-d(x_{t+1},L_{0})-d(x_{t},L_{0})-1+n+1-d(x_{t},L_{0})-d(x_{t-1},L_{0})-1$ = $2n-(d(x_{t-1},L_{0})-d(x_{t+1},L_{0})+1)-2(d(x_{t},L_{0})+d(x_{t+1},L_{0})-1/2)$ $\leq$ $2n-d(x_{t-1},x_{t+1})-2(n/2+1-1/2)$ = $n-1-d(x_{t-1},x_{t+1}) < n+1-d(x_{t-1},x_{t+1})$, a contradiction with $\vp$ is a radio labeling. Hence, $\rn(P_{n} \square K_{m}) \geq (mn^{2}-2n+m+2)/2$. In fact, this lower bound is the actual value for $\rn(P_{n} \square K_{m})$ and for that, it is enough to give a radio labeling with span equal to this lower bound. Again note that the radio labeling given by Kim \emph{et al.} in \cite{Kim} serve this purpose (the readers are required to understand and adjust with notation matter) which complete the proof.
\end{proof}

\section{Concluding remarks}
In \cite{Das}, Das \emph{et al.} gave a technique to find a lower bound for the radio $k$-coloring of graphs which also cover the case of radio labeling when $k$ = $\diam(G)$. In \cite{Das}, authors fixed a vertex as $L_{0}$ when $\diam(G)$ is even and a maximal clique $C$ of $G$ as $L_{0}$ when $\diam(G)$ is odd. We remark that our approach is more useful to find a better lower bound for the radio number of graphs than one given by Das \emph{et al.} in \cite{Das} and this can be realize for the graph $P_{n} \square P$. Note that in case of $P_{n} \square P$, if we fix a vertex or a maximal clique then there is a large gap between a lower bound for the radio number of $P_{n} \square P$ and the actual value of radio number of $P_{n} \square P$. Moreover, a necessary and sufficient condition to achieve the lower is useful to determine the exact radio number of graphs. It is also possible to determine the existing radio number for complete graph $K_{n}$, wheel graph $W_{n}$, $n$-gear graph $G_{n}$, paths $P_{n}$ using Theorem \ref{thm:lower} and \ref{thm:main}.

Finally, we suggest the following further work in direction of present research work.
\begin{enumerate}
  \item Find graphs $G$ such that $\rn(P_{n} \square G)$ can be determine using Theorem \ref{thm:lower} and \ref{thm:main} (we suggest star graph, wheel graph etc. as $G$).
  \item Find graphs $G_{1}$ and $G_{2}$ such that $\rn(G_{1} \square G_{2})$ can be determine using Theorem \ref{thm:lower} and \ref{thm:main}.
  \item More generally, find graphs $G$ other than trees whose radio number can be determine using Theorem \ref{thm:lower} and \ref{thm:main}.
\end{enumerate}

\section*{Acknowledgements}
I want to express my deep gratitude to anonymous referees for kind comments and constructive suggestions.

\end{document}